\newtheorem{theorem}{Theorem}[section]
\newtheorem{claim}[theorem]{Claim}
\newtheorem{proposition}[theorem]{Proposition}
\newtheorem*{remark*}{Remark}
\newcommand{\floor}[1]{\left\lfloor{#1}\right\rfloor}
\newcommand{\ceil}[1]{\left\lceil{#1}\right\rceil}
\DeclareMathOperator{\ex}{ex}
\newenvironment{poc}{\begin{proof}[Proof of claim]}{\end{proof}}
\def\C{\mathcal C}
\def\F{\mathcal F}
\newcommand*{\abs}[1]{\lvert#1\rvert}
\title{How connectivity affects the extremal number of trees}
\author{Suyun Jiang\thanks{School of Artificial Intelligence, Jianghan University, Wuhan, Hubei, China, and Extremal Combinatorics and Probability Group (ECOPRO), Institute for Basic Science (IBS), Daejeon, South Korea. Supported by National Natural Science Foundation of China (11901246) and China Scholarship Council and IBS-R029-C4.
Email: jiang.suyun@163.com. }
\and
Hong Liu\thanks{Extremal Combinatorics and Probability Group (ECOPRO), Institute for Basic Science (IBS), Daejeon, South Korea.
Supported by IBS-R029-C4.
 Emails: hongliu@ibs.re.kr. }
\and
Nika Salia\thanks{King Fahd University of Petroleum and Minerals, Dhahran, Saudi Arabia. Supported by  the National Research, Development and Innovation Office NKFIH grant K132696. Email: salianika@gmail.com.}
}
\begin{document}
\maketitle
\begin{abstract}
The Erd\H{o}s-Sós conjecture states that the maximum number of edges in an $n$-vertex graph without a given $k$-vertex tree is at most $\frac {n(k-2)}{2}$. 
Despite significant interest, the conjecture remains unsolved. Recently, Caro, Patkós, and Tuza considered this problem for host graphs that are connected. Settling a problem posed by them, for a $k$-vertex tree $T$, we construct $n$-vertex connected graphs that are $T$-free with at least $(1/4-o_k(1))nk$ edges, showing that the additional connectivity condition can reduce the maximum size by at most a factor of 2. Furthermore, we show that this is optimal: there is a family of $k$-vertex brooms $T$ such that the maximum size of an $n$-vertex connected $T$-free graph is at most $(1/4+o_k(1))nk$.
\end{abstract}

\section{Introduction}
In extremal graph theory, a central focus is determining the extremal number of various graphs. 
The extremal number, denoted by $\ex(n,F)$, is the maximum number of edges in an $n$-vertex graph that does not contain a graph $F$ as a subgraph, not necessarily induced. 
While the asymptotic behavior of this function has been determined for all non-bipartite graphs by Erd\H{o}s,  Stone, and Simonovits~\cite{erdos1946structure, erdHos1965limit}, the behavior for bipartite graphs remains open with significant interest from the community. 

The Erd\H{o}s-Gallai theorem~\cite{erdHos1959maximal}, established in 1959, studied the $k$-vertex path $P_k$, stating that $\ex(n,P_k)\le \frac{n(k-2)}{2}$, and the maximum value is achieved by $\cup K_{k-1}$, the disjoint union of $K_{k-1}$ when $k-1 \big| n$. Once the extremal number of the path is determined, extending the question to a tree is a natural next step. In 1962, Erd\H{o}s and Sós~\cite{erd1964extremal} conjectured that for any $k$-vertex tree, its extremal number is at most $\frac{n(k-2)}{2}$, and again the disjoint union of $K_{k-1}$ serves as an example for tightness. 

Motivated by the fact that the conjectured maximizer $\cup K_{k-1}$ is not connected, a natural variant is to consider host graphs that are connected, see e.g~\cite{furedi2013history, caro2022connected}. Formally, the connected extremal number $\ex_{c}(n,F)$ is the maximum number of edges in an $n$-vertex connected graph without a subgraph isomorphic to $F$. 
While the additional connectivity condition does not affect the asymptotics of the extremal number when the forbidden graph is non-bipartite or 2-edge-connected, Caro, Patk\'{o}s, and Tuza~\cite{caro2022connected} investigated what effect it has for trees. Notice that, in contrast with the classical extremal number, its connected relative is not even a  monotone function of $n$. Indeed, for paths, it is known that for every $k\geq 10$, $\ex_{c}(k,P_k)=\binom{k-2}{2}+2<\binom{k-1}{2}=\ex_{c}(k-1,P_k)$.

Such a connected variant for trees was in fact studied before and could date back to the work of Kopylov~\cite{kopylov1977maximal} in 1977, in which he resolved the problem for paths, showing that for $n\geq k \geq 4$,
\begin{equation}\label{Eq:Erd_gallai_connected}
  \ex_{c}(n,P_{k})=\max \left\{\binom{k-2}{2}+(n-(k-2)), \floor{\frac{k-2}{2}}\left(n-\ceil{\frac{k}{2}}\right)+\binom{\ceil{\frac{k}{2}}}{2} \right\}.
\end{equation}
Later, Balister, Gy{\H{o}}ri, Lehel, Schelp~\cite{balister2008connected} characterized extremal graphs for every $n$. There are also recent developments towards the stability version of this theorem by F{\"u}redi, Kostochka, Luo, Verstra{\"e}te~\cite{furedi2018stability,furedi2016stability}.

By the Erd\H{o}s-Gallai theorem and Kopylov's result~\eqref{Eq:Erd_gallai_connected}, we see that the asymptotic of the maximum number of edges in an $n$-vertex $P_k$-free graph does not change by an additional connectivity constraint as $n\rightarrow \infty$ and $k\rightarrow \infty$. Caro, Patk\'{o}s, and Tuza~\cite{caro2022connected} studied how much smaller $\ex_c(n,T)$ can be for a $k$-vertex tree $T$, compared to $\frac{n(k-2)}{2}$. Formally, they defined
\begin{equation}\label{Eq:Def_gamma}
\gamma_{k}:=\inf\Big\{\limsup_{n \to \infty}\frac{\ex_c(n,T_k)}{\frac{n(k-2)}{2}}:T_k \mbox{~is~a~$k$-vertex~tree}\Big\}\mbox{~ and ~} \gamma:=\lim_{k \to \infty}\gamma_k.
\end{equation}
It is not hard to see that this limit exists. From above, Caro, Patk\'{o}s, and Tuza~\cite{caro2022connected} found a family of trees whose connected extremal number is asymptotically smaller, yielding $\gamma\le \frac{2}{3}$. From below, for every tree, they gave constructions showing that $\gamma\ge \frac{1}{3}$.
They asked where the truth lies between $\frac{1}{3}$ and $\frac{2}{3}$.    

Our main result settles this problem.

\begin{theorem}\label{thm:gamma}
Let $\gamma$ be as defined in~\eqref{Eq:Def_gamma}, we have $\gamma=\frac{1}{2}$.
\end{theorem}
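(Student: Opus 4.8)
The plan is to establish the two inequalities $\gamma\ge\frac12$ and $\gamma\le\frac12$ separately. For $\gamma\ge\frac12$ it suffices to show that for every $k$-vertex tree $T$ and all large $n$ there is a connected $T$-free graph on $n$ vertices with at least $\left(\frac14-o_k(1)\right)nk$ edges, since this quantity divided by $\frac{n(k-2)}{2}$ tends to $\frac12$; for $\gamma\le\frac12$ it suffices to exhibit one family of $k$-vertex brooms $B_k$ with $\ex_c(n,B_k)\le\left(\frac14+o_k(1)\right)nk$, because then $\gamma_k\le \limsup_n \frac{\ex_c(n,B_k)}{n(k-2)/2}\le\frac12+o_k(1)$.

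For the lower bound the idea is to build the desired graph from many copies of a single dense ``block'' on $\Theta(k)$ vertices, glued into a connected graph by long, low-degree ``bridges''. Here a bridge is an induced path whose internal vertices all have degree exactly $2$; crucially, a tree $T$ contains only a bounded-length sub-path through vertices of degree at most $2$ (call this length $\mu(T)$), so if every bridge has more than $\mu(T)$ internal vertices then no copy of $T$ can traverse a bridge, and so any copy of $T$ must lie inside one block together with only short ``excursions'' into the incident bridges. The block is then chosen to be a $T$-free graph with $\Theta(k^2)$ edges tailored to whichever structural parameter of $T$ is largest: a clique $K_{k-1}$ (which is automatically $T$-free); a complete bipartite graph $K_{a,b}$ with $\min(a,b)$ just below the size of the smaller colour class of $T$; a nearly $(\Delta(T)-1)$-regular graph, avoiding the star $K_{1,\Delta(T)}\subseteq T$; or Kopylov's extremal $P_{\ell(T)}$-free graph, where $\ell(T)$ is the number of vertices on a longest path of $T$. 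Tuning the block size against the excursion lengths and the bridge length against $\mu(T)$, one obtains a connected $T$-free graph, and an optimization over the free parameters shows that with the right choice of block the edge count is at least $\left(\frac14-o_k(1)\right)nk$.

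The main obstacle in the lower bound is the case analysis establishing that for \emph{every} tree some choice of block works: one needs quantitative inequalities tying together $\ell(T)$, $\Delta(T)$, the smaller colour class $\beta(T)$ (equivalently the vertex cover number $\tau(T)$), $\mu(T)$, and the total length of the longest few ``pendant bare paths'' of $T$, so that whenever, say, the clique-plus-bridge construction falls short it is precisely because $T$ has a long path (handled by the Kopylov block) or a high-degree vertex (handled by the regular block) or a balanced bipartition (handled by the complete bipartite block). I expect the tightest regime — brooms with a long handle and comparably many bristles, and their close relatives — to be exactly where the ratio degrades all the way to $\frac12$, which is consistent with the matching upper bound.

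For the upper bound I would take $B_k$ to be the broom whose handle is a path on $\lceil k/2\rceil$ vertices with $\lfloor k/2\rfloor$ extra pendant edges attached at one handle-end; note $B_k\supseteq P_{\lceil k/2\rceil+1}$ and $\tau(B_k)\approx k/4$, so Kopylov's extremal $P_{\lceil k/2\rceil+1}$-free graph (essentially $K_{k/4-1}+\overline{K}$) is connected and $B_k$-free with $\approx\frac{k}{4}n$ edges, which already gives the matching lower bound for this tree. To prove $\ex_c(n,B_k)\le\left(\frac14+\varepsilon\right)nk$ for $k$ large in terms of $\varepsilon$, suppose $G$ is connected with more than $\left(\frac14+\varepsilon\right)nk$ edges; after deleting vertices of degree at most $\left(\frac14+\frac\varepsilon2\right)k$ we obtain a nonempty subgraph of min degree $>\left(\frac14+\frac\varepsilon2\right)k$, which (being connected) contains a path on more than $\left(\frac12+\varepsilon\right)k$ vertices, and we must upgrade this to a copy of $B_k$ by locating a vertex of degree $\ge\frac{k}{2}+1$ that is the tip of a path on $\lceil k/2\rceil$ vertices. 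The hard part is exactly this coupling of ``long path'' with ``high-degree endpoint'', since $\frac14 nk$ is the Kopylov threshold for a path on $k/2$ vertices; I would use the extra $\varepsilon nk$ edges via a stability/rotation-extension argument — either by passing to a densest $2$-connected block, finding a long cycle there via a Kopylov-type bound for circumference, and using a near-maximum-degree vertex of that cycle as the broom's centre (handle along the cycle, bristles its other neighbours), or by showing any near-extremal $B_k$-free connected graph is close to $K_{k/4-1}+\overline{K}$, which visibly contains $B_k$ after the slightest perturbation.
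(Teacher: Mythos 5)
Your overall architecture matches the paper's: a lower bound showing every tree admits a connected $T$-free host with $\left(\frac14-o_k(1)\right)kn$ edges, built from dense blocks linked by paths; and an upper bound supplied by a broom with handle and bristle sizes both $\approx k/2$. However, both halves of your sketch have genuine gaps, and the paper uses a different organizing idea in each half.

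On the lower bound, the paper does build $T$-free hosts out of cliques joined by pendant paths (the graphs $\textsf{S}_{n,x}$ and $\textsf{P}_{n,x}$), which is close in spirit to your ``blocks plus bridges.'' But two things go wrong in your version. First, the block $K_{k-1}$ does not work: in any connected construction the clique has a neighbour $z$ on the bridge, $z$ has degree two, and mapping a leaf of $T$ to $z$ and the rest of $T$ into $K_{k-1}$ gives a copy of $T$; you would have to shrink the clique to roughly $k/2$ and compensate with the pendant paths, which is exactly what the paper does. Second, and more importantly, your case split by $\ell(T)$, $\Delta(T)$, $\mu(T)$ and the colour-class sizes is not obviously exhaustive, and you acknowledge this as the ``main obstacle.'' The paper's key organizing idea, which is absent from your sketch, is the \emph{barycenter} of $T$ (Proposition~\ref{claim:barycenter}): a vertex $v$ that lies on the $\ge\lceil k/2\rceil$ side of every edge cut. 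This yields a clean trichotomy by $d(v)\in\{2\}$, $\{\ge 4\}$, $\{3\}$. In the first two cases the barycenter simply cannot be placed anywhere in $\textsf{S}_{n,\lfloor(k-2)/2\rfloor}$ or $\textsf{P}_{n,\lfloor(k-2)/2\rfloor}$; the residual hard case $d(v)=3$ is settled by the bipartite observation (Claim~\ref{claim:Balanced}, which corresponds to your bipartite block) together with playing $\textsf{S}_{n,x}$ and $\textsf{P}_{n,x}$ against each other for two well-chosen values of $x$ to locate a spider subtree $S_y$, and then testing $\textsf{S}_{n,k-y-1}$. Without the barycenter you have no analogue of this pivot, and I do not see how your list of parameters closes the $d(v)=3$ case.

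On the upper bound, you correctly pick the broom $B(k,\lceil k/2\rceil)$ and the matching construction, but the proof sketch has a real gap at exactly the spot you flag: a subgraph of minimum degree $>(1/4+\varepsilon/2)k$ need have \emph{no} vertex of degree $\ge k/2+1$ (e.g.\ a union of $K_{k/2-1}$'s linked into a tree by cut edges is $\approx(1/4+o(1))kn$-dense, connected, contains long paths, yet has maximum degree $<k/2$), so ``long path plus high-degree tip'' does not follow from the min-degree reduction and the ``stability/rotation-extension'' step is where the whole proof would have to live. The paper avoids this entirely by proving the much sharper Theorem~\ref{thm:brooms}: it splits on $\Delta(G)$ directly. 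If $\Delta(G)\le k-d$ the bound is immediate; if $k-d+1\le\Delta(G)\le k-2$ a greedy BFS path through a pigeonholed window around the max-degree vertex produces $B(k,d)$; and if $\Delta(G)\ge k-1$ then $G$ must be $\mathcal{C}_{\ge d}$-free (a long cycle plus a path to the high-degree vertex yields a broom), after which the Woodall--Kopylov circumference bounds (Theorems~\ref{Thm:Woodall-cycle} and~\ref{thm:2-connected_Kopylov}) applied to the block decomposition finish the count. This is where the coupling of ``long structure'' and ``high degree'' is actually achieved — through the cycle/degree dichotomy rather than a min-degree cleaning — and it is the step your sketch leaves open.
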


To obtain the lower bound $\gamma\ge \frac{1}{2}$, we provide several families of different constructions depending on the `center of mass' of the forbidden tree (see Section~\ref{sec:barycenter}), realizing the following.  

\begin{theorem}\label{thm:Main}
    For any $k$-vertex tree $T_k$, we have 
    \[\ex_c(n,T_k)\geq \left( \frac{1}{4}-o_k(1) \right)kn.\]
\end{theorem}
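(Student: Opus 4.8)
The plan is to produce, for every $k$-vertex tree $T_k$, a connected $n$-vertex $T_k$-free graph with at least $(1/4-o_k(1))kn$ edges, choosing the graph according to the structure of $T_k$ near its center of mass (Section~\ref{sec:barycenter}). Fix a small parameter $\epsilon=\epsilon(k)\to 0$ slowly. I keep a short list of connected host graphs whose extremal thresholds are transparent: (a) the Kopylov extremal graph for $P_m$, which by~\eqref{Eq:Erd_gallai_connected} has roughly $\tfrac m2\,n$ edges and contains no $P_m$; (b) a connected graph of maximum degree $\Delta(T_k)-1$, which is trivially $T_k$-free and has $\tfrac{\Delta(T_k)-1}{2}n$ edges; (c) the complete bipartite graph $K_{a,n-a}$, which contains $T_k$ if and only if $a$ is at least the size of the smaller class of the (unique) bipartition of $T_k$, and which has $\approx an$ edges; and (d) a \emph{windmill of cliques} $W_q$ --- about $n/q$ copies of $K_q$ all sharing one common vertex --- which, since every $k$-vertex connected subgraph that meets two blades must use the shared vertex, contains $T_k$ if and only if $q-1\ge \mu(T_k):=\min_v\max\{|C|:C\text{ a component of }T_k-v\}$; this $W_q$ has $\approx \tfrac q2\, n$ edges.

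The ``large parameter'' cases are then immediate. If $T_k$ contains a path on $m:=\ceil{(\tfrac12+\epsilon)k}$ vertices, host (a) works; if $\Delta(T_k)\ge(\tfrac12+\epsilon)k$, host (b) works; if the smaller class of the bipartition of $T_k$ has at least $(\tfrac14+\epsilon)k$ vertices, host (c) with $a$ one less than that size works. Finally, if $\mu(T_k)\ge(\tfrac12-\epsilon)k$, then $W_{\mu(T_k)}$ is $T_k$-free by the criterion above and has $\approx\tfrac{\mu(T_k)}2 n\ge(\tfrac14-o_k(1))kn$ edges. So from now on assume $T_k$ escapes all four: no long path, maximum degree bounded away from $k/2$, a lopsided bipartition, and $\mu(T_k)<(\tfrac12-\epsilon)k$.

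In this remaining ``bushy'' case, fix a vertex $c$ witnessing $\mu(T_k)<(\tfrac12-\epsilon)k$, so every component of $T_k-c$ has fewer than $(\tfrac12-\epsilon)k$ vertices; since the component orders sum to $k-1$ one checks that $d:=\deg_{T_k}(c)\ge 3$. The host I would use is a \emph{caterpillar of cliques}: take a long bare path $x_1x_2\cdots x_t$ and attach to each $x_i$, by a single edge, a clique $Q_i$ of order $\approx k/2$; this is connected and has $\tfrac14 kn+O(n/k)$ edges. The crucial features are that the path vertices have degree $\le 3$ (so none can be the image of $c$, and none can carry more than one extra pendant edge) and that each clique has only \emph{one} portal to the rest of the graph. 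Hence, writing $\phi$ for a putative embedding with $\phi(c)\in Q_i$, each component of $T_k-c$ is sent to a subtree rooted at a neighbour of $\phi(c)$, and all but at most one of these subtrees must lie entirely inside $Q_i$: a subtree leaving $Q_i$ must exit through the unique portal, and at most one subtree can do so. If the components of $T_k-c$ are all of comparable (medium) size, then $Q_i$, having only $\approx k/2$ vertices, cannot host enough of them, and $d\ge 3$ yields a contradiction; the cases where $c$ has a few small components, or where $d\in\{3,4\}$ is borderline, should be pushed through by letting the clique sizes and attachment pattern (a tree of cliques of branching $<d$, or cliques of two different orders) depend on the multiset of component orders at $c$, always keeping the edge count at $(\tfrac14-o_k(1))kn$.

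I expect the main obstacle to be precisely this last case. The earlier cases cost essentially nothing once the right host is named, but in the bushy case one must (i) argue, using only that $\mu(T_k)$ and $\Delta(T_k)$ are bounded away from $k/2$, that $T_k$ has no long path, and that its bipartition is lopsided, that a suitable splitting vertex $c$ with the right component profile exists; and (ii) verify $T_k$-freeness of the tailored caterpillar of cliques by a routing/packing argument that rules out \emph{every} choice of $\phi(c)$ and \emph{every} distribution of the components among the cliques --- the bookkeeping there (how many components can stay in one clique, how many can escape through a portal, how the bare stretches block the passage of a component that has no long bare path) is where the real work lies.
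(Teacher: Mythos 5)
Your high-level plan is in the same spirit as the paper's: both hinge on a centroid-type vertex (your $c$; the paper's barycenter $v$) and both rely on a caterpillar-of-cliques host (your construction is essentially the paper's $\textsf{P}_{n,x}$, and your windmill is the paper's $\textsf{S}_{n,x}$ stripped of its pendant paths). Your ``large parameter'' cases are sound: (a) a Kopylov extremal graph for a long path, (b) a bounded-degree graph when $\Delta(T_k)$ is large, (c) $K_{a,n-a}$ when the smaller colour class is large, and (d) the windmill when $\mu(T_k)$ is large all give $(\tfrac14-o_k(1))kn$ edges in the regimes you name, and your deduction that in the remaining case $\deg(c)\ge 3$ is correct.

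However, the proof has a genuine and substantial gap exactly where you flag one. When $\deg(c)=3$, the centroid can legitimately map to a degree-$3$ spine vertex of your caterpillar, and ruling this out is the core of the theorem, not ``bookkeeping.'' The paper's treatment of this case rests on three ingredients absent from your sketch: (i) Claim~\ref{claim:Balanced}, which shows that two vertex-disjoint spider subtrees with centers of degree at most three covering roughly $k/2$ vertices force both colour classes of $T_k$ to have size $\gtrsim k/4$, so that $K_{\approx k/4,\,n-k/4}$ is $T_k$-free; (ii) pendant paths of tuned length $a_x-x$ attached to the cliques in $\textsf{S}_{n,x}$ and $\textsf{P}_{n,x}$, whose length is chosen so that either every embedding sends the barycenter into a clique or the two spiders of (i) appear---your cliques are attached by a single edge, which discards precisely this leverage; and (iii) a two-round argument, first with $x_1=\lfloor k/\sqrt{2}\rfloor$ to extract a maximal spider subtree $S_y$ hanging off a degree-$\ge 3$ vertex $v_1$, then with $x_2=k-y-1$ chosen in terms of $y$ to close the case. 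You gesture at ``letting the clique sizes and attachment pattern depend on the multiset of component orders at $c$,'' which is the right instinct, but without the spider/bipartition claim, the pendant-path parameter $a_x$, and a concrete choice of the second clique size, the $\deg(c)=3$ case remains open---and that is exactly the regime your other hosts cannot reach.
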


On the other hand, we determine the exact connected extremal number  of brooms with $k$ vertices and diameter $d$, denoted by $B(k,d)$, for large enough $n$. In particular, $B(k,d)$ is the graph obtained from a path of $d+1$ vertices by blowing up a leaf to an independent set of size $k-d$. The following theorem is stated using graphs $\textsf{G}_{n,\cdot,\cdot}$ and $\textsf{F}_{n,\cdot,\cdot}$, which are defined in Section~\ref{Sec:consttructions}. Some of these graphs (so-called edge blow-up of stars) have been studied before, see e.g.~\cite{chen2003extremal,erdos1995extremal,Yuan2022extremal}. As the path is also a broom, the result below can be viewed as an extension of Kopylov's result \eqref{Eq:Erd_gallai_connected}.

\begin{theorem}\label{thm:brooms}
For every integer $k$ and $d$ such that  $k\geq d+2 \geq 8$, and $n\geq k^{dk}$ we have
  \[
\begin{displaystyle}
\ex_c(n, B(k,d)) =
\begin{cases}
e(\emph{\textsf{G}}_{n,d,\floor{\frac{d-1}{2}}}) \,\,\:\quad\quad\quad\quad\quad\quad\quad\quad\quad\quad\quad\mbox{if  $d\geq \frac{k+5}{2}$}, \\
\max\{e(\emph{\textsf{G}}_{n,d,\floor{\frac{d-1}{2}}}), e(\emph{\textsf{F}}_{n,\frac{k+2}{2},1})\} \;\;\;\quad\quad\quad\mbox{if d=$\frac{k+2}{2} \mbox{\ or } \frac{k+4}{2}$}, \\
\max\{e(\emph{\textsf{G}}_{n,d,\floor{\frac{d-1}{2}}}), e(\emph{\textsf{F}}_{n,d,2}), e(\emph{\textsf{F}}_{n,d,3})\} \quad\mbox{if $d= \frac{k+3}{2}$}, \\
\floor{\frac{(k-d)n}{2}} \,\:\:\:\:\:\quad\quad\quad\quad\quad\quad\quad\quad\quad\quad\quad\quad \mbox{if $d\leq  \frac{k+1}{2}$}.
 \end{cases}
 \end{displaystyle}
 \]
\end{theorem}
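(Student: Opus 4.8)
The plan is to establish the two matching inequalities separately: the lower bound by exhibiting and verifying the constructions appearing on the right-hand side, and the upper bound by a structural analysis of an edge-maximum connected $B(k,d)$-free graph, with Kopylov's theorem~\eqref{Eq:Erd_gallai_connected} serving as a base case since $B(k,d)\supseteq P_{d+1}$.

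For the lower bound, note the two elementary reasons a graph can fail to contain $B(k,d)$: it may contain no $P_{d+1}$ (which $B(k,d)$ does), or it may have maximum degree at most $k-d$ (the center of the broom has degree $k-d+1$). The graph $\textsf{G}_{n,d,\floor{\frac{d-1}{2}}}$ is the Kopylov-type extremal graph for the first obstruction — a clique on $\floor{\frac{d-1}{2}}$ vertices joined to an independent set, possibly with one extra edge on the independent side when the parity of $d$ permits it without creating a $P_{d+1}$ — so its $B(k,d)$-freeness is immediate and its size is a routine count; for $d\le\frac{k+1}{2}$, a connected near-$(k-d)$-regular graph realises the second obstruction with $\floor{\frac{(k-d)n}{2}}$ edges. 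The graphs $\textsf{F}_{n,\cdot,\cdot}$ realise a hybrid of the two: up to a spanning low-degree attachment they are edge-blow-ups of a star, i.e. a bounded number of cliques of bounded size glued at a common vertex. Their $B(k,d)$-freeness is the only genuinely delicate point of this direction: since any path visits the glueing vertex at most once, a short case analysis — on whether the center of a hypothetical copy of $B(k,d)$ is the glueing vertex or lies in a petal — shows that in every case either the handle $P_d$ does not fit, or fewer than $k-d$ neighbours of the center are left over to be bristles, the clique sizes being tuned precisely so that all cases fail. The lower bound is the maximum over the relevant constructions.

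For the upper bound, let $G$ be a connected $n$-vertex $B(k,d)$-free graph with the maximum number of edges, $n\ge k^{dk}$; in particular adding any non-edge creates a copy of $B(k,d)$. The engine is an embedding lemma: if a connected graph has a vertex $v$ of degree at least $k-1$ and a path on $d$ vertices starting at $v$, then it contains $B(k,d)$ — the path is the handle and any $k-d$ of the remaining $\ge k-1-(d-1)=k-d$ neighbours of $v$ are the bristles — while with a little more care a vertex of degree only between $k-d+1$ and $k-2$ still completes a broom provided it starts a $d$-vertex path avoiding enough of its own neighbourhood. One then argues (for instance through a depth-first search tree rooted to keep high-degree vertices shallow, so that each has subtree-height at most $d-2$, together with the standard inequality $e(G)\le\sum_v\mathrm{depth}(v)$) that $G$ falls into one of two regimes. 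If $G$ contains no $P_{d+1}$, then Kopylov's theorem~\eqref{Eq:Erd_gallai_connected} applied with $d+1$ in place of $k$ gives $e(G)\le\ex_c(n,P_{d+1})$, which for $n$ this large equals $e(\textsf{G}_{n,d,\floor{\frac{d-1}{2}}})$ and hence is at most the claimed bound. If instead $G$ contains a $P_{d+1}$, then a vertex of degree $\ge k-1$, this long path, and the connectivity of $G$ would together build a $B(k,d)$ unless such a vertex is ``cut off'' from long extensions; peeling off the bounded-size set of high-degree vertices and showing that every other vertex attaches to the rest of $G$ with degree at most $k-d$ forces $G$ to be an $\textsf{F}$-type sunflower (or the degree-bounded graph), and an edge count then yields the remaining cases.

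I expect the main obstacle to be the middle regime $k-d\in\{d-4,d-3,d-2\}$, equivalently $d\in\{\tfrac{k+2}{2},\tfrac{k+3}{2},\tfrac{k+4}{2}\}$, where the path and degree obstructions are comparably strong and the extremal graph is the genuinely hybrid $\textsf{F}$: one must both pin down the exact optimal number and size of petals — the split into three sub-cases reflects how sensitive this optimum is to the residue of $k-d$ — and prove a matching \emph{exact} upper bound, which, since no $o(n)$ slack is affordable, requires a full structural description of the near-extremal connected $B(k,d)$-free graphs rather than an asymptotic estimate. Controlling precisely how the roughly $n$ low-degree vertices hang off the bounded-size core is what forces the very generous hypothesis $n\ge k^{dk}$, the clean-up proceeding in up to about $dk$ rounds each costing a factor of about $k$; the requirement $d\ge6$ removes sporadic small cases where the constructions degenerate.
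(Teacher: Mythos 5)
Your high-level plan (verify constructions, then a structure theorem for the upper bound) matches the paper, but the upper-bound strategy you sketch diverges in a way that leaves a genuine gap, and I do not see how it reaches the exact answer in the middle regime you yourself flag as the hard part.

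The paper's upper bound rests on a degree trichotomy and a reduction to forbidden \emph{cycles}, not paths. Concretely: if $\Delta(G)\leq k-d$ the bound $\floor{(k-d)n/2}$ is immediate; the range $k-d+1\leq\Delta(G)\leq k-2$ is \emph{vacuous} (a BFS tree from the max-degree vertex $v$, using $n\geq k^{dk}$, yields a path of $dk$ vertices from $v$, and a pigeonhole on where $N(v)$ meets it gives a sub-path of $d-1$ vertices touching $N(v)$ only at its start, which together with $v$ and its remaining neighbours forms $B(k,d)$, a contradiction); and if $\Delta(G)\geq k-1$, then $G$ must be $\mathcal{C}_{\geq d}$-free, since a cycle of length $\geq d$, a shortest path from $v$ to it, and $N(v)$ would build the broom. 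That last reduction is the engine: once $G$ is $\mathcal{C}_{\geq d}$-free, Theorems~\ref{Thm:Woodall-cycle} and \ref{thm:2-connected_Kopylov} apply block-by-block, the circumference of $G$ is pinned to $d-2$ or $d-1$, and a careful comparison of block sizes produces exactly the $\textsf{F}$-type sunflowers in the cases $d\in\{\tfrac{k+2}{2},\tfrac{k+3}{2},\tfrac{k+4}{2}\}$. Your proposal instead splits on whether $G$ contains $P_{d+1}$ and, in the affirmative case, sketches a DFS-based peeling of a bounded set of high-degree vertices. Two ideas are missing: (i) you treat the middle degree range as something to handle rather than exclude outright, whereas its vacuousness is what gives the clean dichotomy between $\Delta\leq k-d$ and $\Delta\geq k-1$; and (ii) you never pass to $\mathcal{C}_{\geq d}$-freeness, which is the step that unlocks the 2-connected block decomposition and hence the exact $\textsf{F}$-counts; a $P_{d+1}$-free hypothesis gives only the $\textsf{G}$-type bound and says nothing directly about the sunflower structure. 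As written, your ``peeling'' heuristic does not produce the block structure, and you offer no mechanism to distinguish the three sub-cases by residue of $k-d$, so the exact equalities in the middle regime are not established.
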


As a corollary, we get the matching upper bound $\gamma\le\frac{1}{2}$ as $\ex_{c}(n,B(k,\ceil{\frac{k}{2}}))= \left( \frac{1}{4}+o_k(1) \right)kn$.

\medskip

\noindent\textbf{Organization.} The rest of the paper is organized as follows. Preliminaries and various constructions are given in Section \ref{sec:prelim}. We then present the proofs of the two main results, Theorems \ref{thm:Main} and \ref{thm:brooms} in Sections \ref{sec:proof} and \ref{sec:broom} respectively. Some concluding remarks are given in Section \ref{sec:concluding remarks}.

\section{Preliminaries}\label{sec:prelim}
\subsection{The Barycenter of a tree}\label{sec:barycenter}
We introduce a key notion needed for constructions in the proof of Theorem~\ref{thm:Main}. For any tree $T$ on $k$ vertices, we call a vertex $v$ of $T$ \textit{a barycenter} if $v$ belongs to a largest connected component of $T-e$ for every edge $e$ in $T$, that is, the vertex $v$ belongs to the component of size at least $\lceil\frac{k}{2}\rceil$ in the graph obtained from $T$ by removing an edge $e$. 
\begin{proposition}\label{claim:barycenter}
    Every tree has either a unique barycenter, or there are exactly two barycenters in the tree joined by an edge.
\end{proposition}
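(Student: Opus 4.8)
The plan is to reduce the statement to the classical theory of tree centroids via a weight function. For a vertex $v$ of $T$, write $w(v)$ for the number of vertices in the largest component of $T-v$. The first and main step is to establish the equivalence
\[
v \text{ is a barycenter} \iff w(v) \le \floor{k/2}.
\]
For the forward implication I would take the largest component $C$ of $T-v$ (so $|C|=w(v)$), let $x$ be the unique neighbor of $v$ lying in $C$, and apply the barycenter condition to the edge $e=vx$: deleting it splits $T$ into $C$ and a set of size $k-|C|$ containing $v$, so $k-|C|\ge \ceil{k/2}$, i.e. $w(v)\le\floor{k/2}$. For the reverse implication, given any edge $e=xy$ and writing $A$, $B$ for the two components of $T-e$ with $v\in A$ say, I would note that $B$ is contained in a single component of $T-v$ (indeed $T[B]$ is connected, contains $y$, and misses $v$), hence $|B|\le w(v)\le\floor{k/2}$ and $|A|\ge\ceil{k/2}$, which is exactly what the barycenter condition requires.

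Next, existence follows from a standard descent. If $w(v)>\floor{k/2}$, there is a (necessarily unique) component $C$ of $T-v$ with $|C|>k/2$; moving to the neighbor $v'$ of $v$ inside $C$, every component of $T-v'$ has size at most $\max\{k-|C|,\,|C|-1\}<w(v)$ (the component of $T-v'$ containing $v$ is $V(T)\setminus C$ and has size $k-|C|$, while the remaining ones sit inside $C\setminus\{v'\}$), so $w$ strictly decreases. Since $w$ takes positive integer values, iterating must terminate at a vertex with $w(v)\le\floor{k/2}$, which by the equivalence above is a barycenter.

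For the ``at most two, and adjacent'' part: suppose $u\neq v$ are both barycenters and let $e$ be any edge on the $u$--$v$ path, splitting $T$ into $A\ni u$ and $B\ni v$. The barycenter condition at $u$ and at $v$ forces $|A|\ge\ceil{k/2}$ and $|B|\ge\ceil{k/2}$, hence $|A|=|B|=k/2$ for every such edge $e$ (in particular $k$ is even). If the $u$--$v$ path had length at least two, then consecutive edges along it would yield strictly nested sets on the $u$-side --- this is where uniqueness of paths in a tree is used --- contradicting that they all have size $k/2$; so $u$ and $v$ are adjacent. Finally, three pairwise-adjacent barycenters would form a triangle, which is impossible in a tree, so there are at most two.

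The bulk of the work is the equivalence with $w(v)\le\floor{k/2}$ together with the bookkeeping of which component of $T-v$ a given piece lands in; the only point where I would be genuinely careful is the nesting claim in the last step, which I expect to be the main (if minor) obstacle, since it is the place where the tree structure is used most essentially.
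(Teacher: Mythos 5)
Your proof is correct, but it takes a genuinely different route from the paper. The paper argues directly with edges: if some edge $e$ splits $T$ into two halves of equal size, its two endpoints are the only barycenters; otherwise it picks an edge $e'$ minimizing the size of the larger component of $T-e'$ and shows (by minimality, comparing $e'$ against edges incident to the chosen endpoint $w$) that the endpoint of $e'$ in the larger component is the unique barycenter. You instead first translate the definition into the classical \emph{centroid} criterion $w(v)\le\lfloor k/2\rfloor$ (where $w(v)$ is the largest component of $T-v$), then get existence by a strict-descent argument on the potential $w(\cdot)$, and get ``at most two, adjacent'' by the nesting of $u$-sides along the $u$--$v$ path. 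Both arguments are sound; the paper's is a terse one-shot extremality argument on edges, while yours is the textbook centroid proof, with the small extra cost of establishing the edge-definition/centroid equivalence up front. Your version has the advantage of making explicit the link to centroids (a fact the paper leaves implicit), which may be independently useful; the paper's has the advantage of brevity and of not introducing the auxiliary quantity $w(v)$. One minor point worth spelling out if you write this up: in the descent step, the uniqueness of the heavy component $C$ (needed so that ``the'' neighbor $v'$ of $v$ in $C$ is well defined) follows because two components of size $>k/2$ would overcount the $k-1$ vertices of $T-v$; you state it parenthetically, and it is correct, but it is the kind of thing a careful reader will want to see justified.
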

\begin{proof}
    If $T$ contains an edge $e$ such that the two components of $T-e$ have the same size, then it is easy to see that  the vertices of $e$ are the only barycenters of $T$. 
    
    If such an edge does not exist in $T$, then let $e^\prime$ be an edge such that the size of the larger connected component of $T-e^\prime$ is minimum. By the choice of $e^\prime$, the vertex $w$ of $e^\prime$ belonging to the larger connected component of $T - e^\prime$ is a barycenter. Considering edges adjacent  to $w$ we can exclude every other vertex from being a barycenter of $T$ by the choice of $w$.
\end{proof}

\subsection{Constructions of various classes of graphs}\label{Sec:consttructions}
We begin with constructions for Theorem~\ref{thm:Main}.

\medskip

\noindent\textbf{The family $\textsf{G}_{n,\cdot,\cdot}$.} We first define the family of extremal graphs for~\eqref{Eq:Erd_gallai_connected}. Recall `$ \cup $'  denotes the disjoint union of graphs, `$+$' denotes the join of the graphs, and  $\overline{K}_{t}$ denotes an independent set of size $t$. For $n\geq k \geq 2s$ let $\textsf{G}_{n,k,s}:=\left(K_{k-2s}\cup \overline{K}_{n-k+s} \right)+K_s$, see Figure~\ref{fig:Constructions}. Note that for every $n$ and $k\geq 6$, there exists a constant $a$ such that $a<2k$ and the only extremal graphs achieving equality in~\eqref{Eq:Erd_gallai_connected} are $\textsf{G}_{n,k-1,1}$ for $n\leq a$ and $\textsf{G}_{n,k-1,\floor{\frac{k-2}{2}}}$ for $n\geq a$.

\noindent\textbf{The families $\textsf{S}_{n,x}$ and $\textsf{P}_{n,x}$.} Let $x$ be an integer such that  $\frac{k}{2}<x<k$ or $x=\floor{\frac{k-2}{2}}$. For the sake of the simplicity of the write-up, we denote 
\begin{equation*}
\begin{displaystyle}
a_x:= 
\begin{cases}
 \floor{\frac{2x^2}{k}}-2 \:\quad\quad\mbox{if  $\frac{k}{2}<x<k$, and } \\
x \;\;\;\qquad\quad\quad\quad\mbox{if $x=\floor{\frac{k-2}{2}}$.}
\end{cases}
 \end{displaystyle}
\end{equation*}

\begin{figure}[h]
\centering    
\begin{tikzpicture}[scale=0.53]
\draw [line width=1pt] (9,14) circle (0.9808159868191375cm);
\draw [line width=1pt] (12,14) circle (0.6529931086925803cm);
\draw [line width=1pt] (16,14) circle (1.882012881774223cm);

\draw [line width=1pt] (11.973913114729054,14.652471818867957)-- (9.105373645040958,14.97513916695556);
\draw [line width=1pt] (11.999514451672207,13.347007071827862)-- (9.113297868919155,13.02574972779148);

\draw [line width=1pt] (15.53384696202449,12.165)-- (11.999514451672207,13.347007071827862);
\draw [line width=1pt] (11.973913114729054,14.652471818867957)-- (15.545126412345173,15.84);

\draw (7.739,14.610374761682377) node[anchor=north west] {$K_{k-2s}$};
\draw (11.307117645824812,14.610374761682377) node[anchor=north west] {$K_s$};
\draw (14.202193621681706,14.610374761682377) node[anchor=north west] {$\overline{K}_{n-k+s}$};
\draw (10.7,14) node[anchor=center] {$\cdots$};
\draw (13.4,14) node[anchor=center] {$\cdots$};
\draw (13.3,10.3) node[anchor=center] {$$};
\end{tikzpicture}
\begin{tikzpicture}[scale=0.69]
\fill (5.5,20.975) circle (0.07cm);
\fill (11.5,20.975) circle (0.07cm);
\fill (14,20.975) circle (0.07cm);
\fill (17.5,20.975) circle (0.07cm);
\draw [line width=1pt] (5,17) circle (1cm);
\draw [line width=1pt] (8.5,17) circle (1cm);
\draw [line width=1pt] (14,17) circle (1cm);
\draw [line width=1pt] (17.5,17) circle (1cm);
\draw[decorate, decoration=snake, segment length=6.5mm, line width=1pt] 
    (5.5,21)-- (2.5,17);
\draw [decorate, decoration=snake, segment length=6.5mm, line width=1pt]  (5.5,21)-- (5,18);
\draw [decorate, decoration=snake, segment length=6.5mm, line width=1pt]  (5.5,21)-- (8.5,18);
\draw [decorate, decoration=snake, segment length=6.5mm, line width=1pt] (11.5,21)-- (11.5,17);
\draw [decorate, decoration=snake, segment length=6.5mm, line width=1pt] (14,21)-- (14,18);
\draw [decorate, decoration=snake, segment length=6.5mm, line width=1pt] (17.5,21)-- (17.5,18);
\draw [line width=1pt] (11.5,21)-- (15,21);
\draw [line width=1pt] (16.5,21)-- (17.5,21);
\draw (15.7,21) node[anchor=center] {$\cdots$};
\draw (5,17) node[anchor=center] {$K_{x}$};
\draw (8.5,17) node[anchor=center] {$K_{x}$};
\draw (14,17) node[anchor=center] {$K_{x}$};
\draw (17.5,17) node[anchor=center] {$K_{x}$};
\draw (3.5,19) node[anchor=center ] {$P$};
\draw (11,19) node[anchor=center ] {$P$};
\draw (6.3,19) node[anchor=center ] {$P_{a_x-x+2}$};
\draw (8.8,19) node[anchor=center] {$P_{a_x-x+2}$};
\draw (13,19) node[anchor=center ] {$P_{a_x-x+2}$};
\draw (16.5,19) node[anchor=center] {$P_{a_x-x+2}$};
\draw (5.5,21.2) node[anchor=center] {$w$};
\draw (11.5,21.2) node[anchor=center] {$w_0$};
\draw (14,21.2) node[anchor=center] {$w_1$};
\draw (6.7,17) node[anchor=center] {$\cdots$};
\draw (15.7,17) node[anchor=center] {$\cdots$};
\end{tikzpicture}
\caption{The graph  $\textsf{G}_{n,k,s}$ on the left, the graph $\textsf{S}_{n,x}$ in the middle  and the graph $\textsf{P}_{n,x}$ on the right.}
\label{fig:Constructions}
\end{figure}


Let $\textsf{S}_{n,x}$ be a graph consisting of  $\floor{\frac{n-1}{a_x}}$ vertex disjoint $K_x$ with pendant paths of length $a_x-x$, a path of $n-1-a_x\floor{\frac{n-1}{a_x}}$ vertices and a vertex $w$ adjacent with an endpoint of each of these paths. 

Let $\textsf{P}_{n,x}$ be a graph consisting of  $\floor{\frac{n}{a_x+1}}$ vertex disjoint $K_x$ with pendant paths of length $a_x-x+1$ with the terminal leaf $w_i$ ($i\geq 1$), a path of $n-(a_x+1)\floor{\frac{n}{a_x+1}}$ vertices with a terminal leaf $w_0$, such that $w_0w_1\dots w_{\floor{\frac{n}{a_x+1}}}$ is a path.

It is easy to see that $e(\textsf{S}_{n,x})=(\frac{1}{4}-o_k(1))kn$ and $e(\textsf{P}_{n,x})=(\frac{1}{4}-o_k(1))kn$ for all $x$. Now we provide constructions for the lower bound in Theorem~\ref{thm:brooms}.

\medskip

\noindent\textbf{The family $\textsf{F}_{n,\cdot,\cdot}$.} 
For $n>d\geq 2$, let $\textsf{F}_{n,d,1}$ be an $n$-vertex connected graph such that  every maximal $2$-connected block is $K_{d-1}$ except at most one clique of size $n-\floor{\frac{n-1}{d-2}}(d-2)$, all sharing a common vertex.
Thus, if $n-1=(d-2)p_1+q_1$ such that $0\leq q_1< d-2$, then 
\[
e(\textsf{F}_{n,d,1})=p_1\binom{d-1}{2}+\binom{q_1+1}{2}=\frac{(d-1)(n-1)}{2}-\frac{q_1(d-2-q_1)}{2}.
\]

Let $\textsf{F}_{n,d,2}$ be an $n$-vertex connected graph such that  every maximal $2$-connected block is a clique with one of size $d-1$, the rest of size $d-2$ except at most one clique of size $n-1-\floor{\frac{n-2}{d-3}}(d-3)$, all sharing a common vertex.
Thus, if $n-2=(d-3)p_2+q_2$ such that $p_2\geq 1$ and $0\leq q_2<d-3$, then 
\[
e(\textsf{F}_{n,d,2})=p_2\binom{d-2}{2}+d-2+\binom{q_2+1}{2}=\frac{(d-2)n}{2}-\frac{q_2(d-3-q_2)}{2}.
\]


For an even integer $d$, let $n-1=(d-3)p_3+q_3$ such that $0\leq q_3<d-3$. 
If $p_3\geq q_3$, let $\textsf{F}_{n,d,3}$ be an $n$-vertex connected graph such that it contains $p_3$ maximal $2$-connected blocks $G_1,\ldots, G_{p_3}$ all sharing a common vertex $v$ with $p_3-q_3$ of them being $K_{d-2}$. The rest of the maximal $2$-connected blocks $G_i$ are the cliques of size $d-1$ without a perfect matching of $G_i-v$. 
We have
\[
e(\textsf{F}_{n,d,3})=\frac{(d-2)(n-1)}{2}.
\]
If $p_3<q_3$, let $\textsf{F}_{n,d,3}$ be an $n$-vertex connected graph such that it contains $p_3+1$ maximal $2$-connected blocks $G_1, \ldots, G_{p_3+1}$ all sharing a common vertex $v$ with $p_3$ of them being the cliques of size $d-1$ without a perfect matching of $G_i-v$ and the remaining one is a clique of size $q_3-p_3+1$.
We have
\[
e(\textsf{F}_{n,d,3})=p_3\frac{(d-2)^2}{2}+\binom{q_3-p_3+1}{2}=\frac{(d-2)(n-1)}{2}-\frac{(q_3-p_3)(d-3-(q_3-p_3))}{2}.
\]

\subsection{Other tools}
To prove the upper bound of $\ex_c(n,B(k,d))$ we need the following theorems.
Let $\C_{\geq k}$ denote the class of cycles of length at least $k$. 
For a class of graphs $\F$, the extremal number of $\F$ is defined analogously, i.e.~the maximum number of edges in a graph not containing a subgraph $F$ for all $F\in \F$, denoted by $\ex(n,\F)$.

Woodall~\cite{woodall1976maximal} and  Kopylov~\cite{kopylov1977maximal} independently improved the Erd\H{o}s-Gallai theorem~\cite{erdHos1959maximal} for long cycles and obtained the following exact result for every $n$, see also~\cite{faudree1975path}.

\begin{theorem}[Woodall~\cite{woodall1976maximal}, Kopylov~\cite{kopylov1977maximal}] \label{Thm:Woodall-cycle}
Let $n=p(k-2)+q+1$, where $0\leq q<k-2$ and $k\geq 3$, $p\geq 1$,
 \[
\ex(n,\C_{\geq k}) = p\binom{k-1}{2}+\binom{q+1}{2}=\frac{(k-1)(n-1)}{2}-\frac{q(k-2-q)}{2}.
\]

\end{theorem}
Note that $\textsf{F}_{n,k,1}$ is an extremal graph.

\begin{theorem}[Kopylov~\cite{kopylov1977maximal}, Woodall~\cite{woodall1976maximal}, Fan, Lv and Wang~\cite{fan2004cycles}]\label{thm:2-connected_Kopylov}
    Suppose $n\geq k\geq 5$, then every $2$-connected $n$-vertex $\mathcal{C}_{\geq k}$-free graph contains at most 
 \begin{equation*}
\max \left\{\binom{k-2}{2}+2(n-(k-2)), \floor{\frac{k-1}{2}}\left(n-\ceil{\frac{k+1}{2}}\right)+\binom{\ceil{\frac{k+1}{2}}}{2} \right\} \mbox{~edges.}
\end{equation*}
The extremal graphs are $\emph{\textsf{G}}_{n,k,2}$ and $\emph{\textsf{G}}_{n,k,\floor{\frac{k-1}{2}}}$.
\end{theorem}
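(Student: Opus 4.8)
The plan is to prove this upper bound by Kopylov's \emph{disintegration method}: one repeatedly strips off small-degree vertices of a $2$-connected $\C_{\geq k}$-free graph $G$, bounds the number of edges lost, and shows that what survives is a small clique.

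I would first record two preliminaries. By the Dirac--Bondy theorem a $2$-connected graph of minimum degree $\delta$ contains a cycle of length at least $\min\{n,2\delta\}$; since $G$ is $\C_{\geq k}$-free and $n\geq k$, this forces $\delta(G)\leq\floor{\frac{k-1}{2}}$. Next, writing $h(s):=\binom{k-s}{2}+s(n-k+s)$, one checks directly that $h(s)=e(\textsf{G}_{n,k,s})$ and that $h$ is a quadratic polynomial in $s$ with positive leading coefficient, hence convex; consequently $\max_{2\leq s\leq\floor{(k-1)/2}}h(s)$ is attained at $s=2$ or $s=\floor{\frac{k-1}{2}}$, so it is enough to prove $e(G)\leq h(s_0)$ for \emph{some} integer $s_0\in[2,\floor{\frac{k-1}{2}}]$. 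Finally, by adding edges we may assume $G$ is edge-maximal among $2$-connected $n$-vertex $\C_{\geq k}$-free graphs (this is needed only to pin down the extremal graphs).

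For an integer $\beta\geq 1$, let $D_\beta(G)$ be the graph obtained by iteratively deleting a vertex of current degree at most $\beta$ until none remains (this is well defined, independent of the order). Each deletion destroys at most $\beta$ edges, so
\[
  e(G)\;\leq\; e\bigl(D_\beta(G)\bigr)\;+\;\beta\bigl(n-\lvert V(D_\beta(G))\rvert\bigr),
\]
and when $D_\beta(G)=\varnothing$, summing the degrees at the moment of deletion gives the sharper bound $e(G)\leq\beta n-\binom{\beta+1}{2}$; using the identity $h(\beta)-\bigl(\beta n-\binom{\beta+1}{2}\bigr)=\tfrac12(k-2\beta)(k-2\beta-1)$ one sees this is $\leq h(\beta)$ for every integer $\beta\in[2,\floor{\frac{k-1}{2}}]$. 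The crux is then to find an integer $\beta_0\in[2,\floor{\frac{k-1}{2}}]$ such that $D_{\beta_0}(G)$ is either empty or a complete graph on at most $k-\beta_0$ vertices. Granting this: if $D_{\beta_0}(G)$ is a nonempty clique it has minimum degree $>\beta_0$, hence at least $\beta_0+2$ vertices, and since $x\mapsto\binom{x}{2}+\beta_0(n-x)$ is increasing for $x\geq\beta_0+1$, the two displayed inequalities give
\[
  e(G)\;\leq\;\binom{k-\beta_0}{2}+\beta_0\bigl(n-k+\beta_0\bigr)\;=\;h(\beta_0)\;\leq\;\max\Bigl\{h(2),\,h\bigl(\textstyle\floor{\frac{k-1}{2}}\bigr)\Bigr\},
\]
which is the asserted bound.

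Producing $\beta_0$ is the main obstacle. I would take $\beta_0$ to be the least $\beta\in[2,\floor{\frac{k-1}{2}}]$ for which $D_\beta(G)$ is empty or complete; that such a $\beta$ exists, and that then $\lvert V(D_{\beta_0}(G))\rvert\leq k-\beta_0$, both rest on the same kind of argument. Namely, if some $D_\beta(G)$ is a large or non-complete dense subgraph $H$, then $H$ has a long path (its minimum degree is large), and this path, joined up using the $2$-connectivity of $G$ together with the vertices that were deleted during the disintegration --- each of which, at the instant it was deleted, still had at least two neighbours present and can therefore be re-routed through a cycle --- can be assembled by a rotation--extension argument into a cycle of $G$ of length at least $k$, contradicting $\C_{\geq k}$-freeness; edge-maximality of $G$ is what forces the surviving piece to be one clique rather than a union of cliques. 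Converting this sketch into a proof requires the delicate quantitative step of checking that enough deleted ``pendant'' vertices, suitably attached, are available to pad a cycle of $K_m$ up to order $k$; it also requires bookkeeping to confirm that all intermediate values $h(s)$ are dominated by the two endpoints, to treat the degenerate situation where the disintegration empties $G$ all at once (which occurs, e.g., for $\textsf{G}_{n,k,\floor{(k-1)/2}}$ when $k$ is odd, and is covered by the sharper empty-core bound above), and to check the small cases $k\in\{5,6\}$ in which the two extremal graphs coincide. Tracing the equality cases through the argument then isolates $\textsf{G}_{n,k,2}$ and $\textsf{G}_{n,k,\floor{(k-1)/2}}$ as the only extremal graphs.
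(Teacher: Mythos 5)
First, note that the paper offers no proof of this statement at all: Theorem~\ref{thm:2-connected_Kopylov} is quoted from Kopylov, Woodall, and Fan--Lv--Wang and used as a black box in Section~\ref{sec:broom}, so your sketch can only be measured against the proofs in those references, which do indeed use the disintegration method you describe.

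Within your sketch, everything you actually verify is correct: $h(s)=e(\textsf{G}_{n,k,s})$, the convexity reduction to a single $s_0\in[2,\floor{\frac{k-1}{2}}]$, the empty-core bound $e(G)\leq \beta n-\binom{\beta+1}{2}\leq h(\beta)$, and the passage from a core on at most $k-\beta_0$ vertices to $e(G)\leq h(\beta_0)$. But the theorem lives entirely in the step you defer: showing that for $\beta_0=\floor{\frac{k-1}{2}}$ the core $D_{\beta_0}(G)$, if nonempty, has at most $k-\beta_0$ vertices. Your formulation (``take the least $\beta$ for which $D_\beta(G)$ is empty or complete'') presupposes that such a $\beta$ exists in the admissible range, which is exactly what has to be proved; and the justification offered --- that a large core of minimum degree at least $\beta_0+1\geq\ceil{\frac{k}{2}}$ inside a $2$-connected graph ``can be assembled by a rotation--extension argument into a cycle of length at least $k$'' --- is a restatement of the desired contradiction, not an argument. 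This step requires a genuine lemma (in Kopylov's proof: between any two vertices of a connected graph of minimum degree $d$ there is a path with at least $d$ edges, combined with a longest-cycle argument in $G$ exploiting $2$-connectivity to attach the core to the rest of the graph at two points), and without it the proof is incomplete. Two smaller points: the core need not be complete (delete an edge inside the $K_{k-4}$ of $\textsf{G}_{n,k,2}$ and the $2$-core is $K_{k-2}$ minus an edge, while the graph is still $2$-connected and $\C_{\geq k}$-free), and only the bound $\abs{V(D_{\beta_0}(G))}\leq k-\beta_0$ is needed for the edge count; and the characterization of the extremal graphs --- the part of the statement due to Fan, Lv and Wang --- is dismissed in one sentence and would require separate work.
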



\section{Proof of Theorem~\ref{thm:Main}}\label{sec:proof}
Let $v$ be a barycenter of the tree $T_k$, which exists by Proposition~\ref{claim:barycenter}. 
We split the proof into three parts depending on the degree of  $v$. Recall that $a_x=\floor{\frac{2x^2}{k}}-2$ if $\frac{k}{2}<x<k$ and $a_x=x$ if $x=\floor{\frac{k-2}{2}}$.

\smallskip

\noindent{\bf Case 1.}  $d(v)=2$. 
Let $x=\floor{\frac{k-2}{2}}$. We claim that $\textsf{S}_{n, x}$ is $T_k$-free and the desired lower bound follows. 
Consider a possible embedding  of $T_k$ in $\textsf{S}_{n, x}$. 
Since $d(v)=2$ and the size of each clique is $x$ in $\textsf{S}_{n, x}$, $v$ is not mapped to $w$ as $2x+1\leq k-1$.
On the other hand, since every clique or path $P$ has size strictly less than 
$\frac{k}{2}$ and there is an edge separating the clique or path $P$ from the rest of the graph, 
the vertex $v$ is not mapped to a vertex of  the clique or $P$ by the definition of the barycenter. 
Thus $\textsf{S}_{n, x}$ is $T_k$-free.

\smallskip

\noindent{\bf Case 2.}  $d(v)\geq 4$.  
Let $x=\floor{\frac{k-2}{2}}$. We claim that $\textsf{P}_{n, x}$ is $T_k$-free and the desired lower bound follows. 
Consider a possible embedding  of $T_k$ in $\textsf{P}_{n, x}$. 
Since $d(v)=4$ and the degree of each vertex 
not in the cliques is at most three, $v$ is not mapped to any $w_i$ or any vertex of $P$.
On the other hand, since every clique has a size strictly less than 
$\frac{k}{2}$ and there is an edge separating the clique from the rest of the graph, 
$v$ is not mapped to a vertex of  a clique  by the definition of the barycenter.
Thus  $\textsf{P}_{n, x}$ is $T_k$-free.

\smallskip

\noindent{\bf Case 3.} $d(v)=3$.
First, we observe the following.

\begin{claim}\label{claim:bounded degree}
    If $T_k$ contains a sub-tree $S$ with $\Delta(S)\leq a$.
    Then we have
    \[\ex_c(n,T_k)\geq \left( \frac{v(S)}{ak}-o_k(1) \right)kn.\]
\end{claim}
\begin{poc}
    Since $\Delta(S)\leq a$ and $e(S)=v(S)-1$, each bipartite class of $S$ has size at least $\frac{v(S)-1}{a}$. Hence, each bipartite class of $T_k$ also has size at least $\frac{v(S)-1}{a}$. Thus $K_{\floor{\frac{v(S)-2}{a}},n-\floor{\frac{v(S)-2}{a}}}$ is $T_k$-free, and we have $\ex_c(n,T_k)\geq e(K_{\floor{\frac{v(S)-2}{a}},n-\floor{\frac{v(S)-2}{a}}})=\left( \frac{v(S)}{ak}-o_k(1) \right)kn.$
\end{poc}

\begin{claim}\label{claim:Balanced}
    Let $c$ be a constant and let $S_1$ and $S_2$ be vertex disjoint sub-trees of $T_k$ such that $v(S_1)+v(S_2)\geq \frac{k-c}{2}$. If each of $S_1$ and $S_2$ is a spider\footnote{A spider is a tree with all vertices of degree at most two, except possibly one vertex of any degree, referred to as the central vertex of the spider.} with  the central vertex of degree at most three.
    Then we have    \[\ex_c(n,T_k)\geq \left( \frac{1}{4}-o_k(1) \right)kn.\]
\end{claim}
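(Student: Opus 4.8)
The plan is to realise the required graph as a single split graph and to reduce its $T_k$-freeness to a lower bound on the minimum vertex cover number of $T_k$, which I will write $\tau(T_k)$. Fix a sufficiently large constant $C=C(c)$ (concretely $C:=\lceil c/4\rceil+5$ will do), set $s:=\lceil k/4\rceil-C$, and let $H:=K_s+\overline{K}_{n-s}$; this is precisely the graph $\textsf{G}_{n,2s,s}$ from Section~\ref{Sec:consttructions}. Then $H$ is connected, and for $n$ large $e(H)=\binom{s}{2}+s(n-s)=sn-\binom{s+1}{2}=(\tfrac{1}{4}-o_k(1))kn$, so the whole claim reduces to showing that $H$ contains no copy of $T_k$.

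For this I would invoke the elementary fact that a tree $T$ embeds into $K_s+\overline{K}_{n-s}$ if and only if $T$ has a vertex cover of size at most $s$ (the additional requirement $v(T)\le n$ that would be needed to house the complementary independent set being automatic here): in any embedding the preimage of $\overline{K}_{n-s}$ is an independent set of $T$, so its complement is a vertex cover of $T$ of size at most $s$; conversely, such a cover can be placed in $K_s$ and its complement, which is independent, in $\overline{K}_{n-s}$. Hence it suffices to prove $\tau(T_k)>s$. Since $S_1$ and $S_2$ are vertex-disjoint subtrees of $T_k$, intersecting a minimum vertex cover of $T_k$ with $V(S_1)$ and with $V(S_2)$ gives a vertex cover of each, so $\tau(T_k)\ge\tau(S_1)+\tau(S_2)$. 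Next I would bound $\tau$ of a spider from below: a spider whose central vertex has degree at most three has at most three legs, hence contains a matching of size at least $\tfrac{1}{2}(v(S_i)-3)$ --- match the centre along a longest leg and take a maximum matching inside each remaining leg --- and therefore $\tau(S_i)\ge\tfrac{1}{2}(v(S_i)-3)$. Combining the last three sentences, $\tau(T_k)\ge\tfrac{1}{2}\big(v(S_1)+v(S_2)\big)-3\ge\tfrac{k-c}{4}-3>s$ once $k$ is large (and $s\ge1$ then too), which is all that is needed since $\gamma=\lim_{k\to\infty}\gamma_k$. This shows $H$ is $T_k$-free and finishes the claim.

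I expect no real obstacle here: the only step that genuinely uses the hypothesis is $\tau(S_i)\ge\tfrac{1}{2}(v(S_i)-3)$, and this fails dramatically if the centre is allowed large degree (a star has vertex cover number $1$), so the restriction ``central vertex of degree at most three'' is exactly the feature being exploited; everything else is the routine edge count for $\textsf{G}_{n,2s,s}$ and the bookkeeping of the additive constants. I would also point out that $H$ is once more a member of the family $\textsf{G}_{n,\cdot,\cdot}$ used throughout the paper, this time with clique part of size roughly $k/4$ rather than $k/2$.
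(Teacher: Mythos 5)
Your proof is correct. It takes a route that is conceptually parallel to the paper's but organized differently. The paper observes that because each $S_i$ is a spider with central degree at most three, \emph{both} bipartite classes of $S_i$ have size at least $\tfrac{v(S_i)}{2}-O(1)$; since $S_1,S_2$ are vertex-disjoint subtrees of $T_k$ and a tree's bipartition is unique, both classes of $T_k$ have size at least $\tfrac{k-c}{4}-O(1)$, so the complete bipartite graph $K_{s,\,n-s}$ with $s\approx k/4$ is $T_k$-free. You instead pass to the split graph $\textsf{G}_{n,2s,s}=K_s+\overline{K}_{n-s}$, reformulate the embedding obstruction as the exact criterion $\tau(T_k)\le s$, and bound $\tau$ from below by $\tau(S_1)+\tau(S_2)$ (disjointness) together with $\tau(S_i)\ge\nu(S_i)\ge\tfrac12(v(S_i)-3)$ via the explicit matching in a $\le 3$-legged spider. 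The two are tightly linked: by K\H{o}nig's theorem $\tau$ of a tree equals its matching number, and $\tau\le\min$ of the two class sizes, so a lower bound on $\tau$ automatically forces both bipartite classes to be large, but not conversely; your argument therefore establishes (with the same hypotheses) the slightly stronger conclusion that the denser graph $K_s+\overline{K}_{n-s}\supsetneq K_{s,n-s}$ is still $T_k$-free. What your version buys is a clean, exact characterization (split-graph embedding $\Leftrightarrow$ vertex cover bound) and the convenient subadditivity $\tau(T_k)\ge\tau(S_1)+\tau(S_2)$ over vertex-disjoint subgraphs, at the cost of a short extra argument producing the matching; the paper's version is shorter to state since it goes directly for the bipartite class sizes. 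Both yield the same asymptotic $(\tfrac14-o_k(1))kn$, and both use the degree-$\le 3$ spider hypothesis in exactly the same essential way (to rule out star-like imbalance).
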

\begin{poc}
    By the hypothesis on $S_1, S_2$, it is easy to see that each bipartite class of $T_k$ has size at least $\frac{k-c}{4}-4$. Thus, $K_{\floor{\frac{k-c}{4}}-5,n-\left(\floor{\frac{k-c}{4}}-5\right)}$  is $T_k$-free, and thus $\ex_c(n,T_k)\geq e(K_{\floor{\frac{k-c}{4}}-5,n-\left(\floor{\frac{k-c}{4}}-5\right)})=\left( \frac{1}{4}-o_k(1) \right)kn.$
\end{poc}
Let $T_k-v$ be a forest of trees $T_1,T_2$, and $T_3$.
By the definition of barycenter, the number of vertices in the union of any two of the trees $T_1,T_2,T_3$ is at least $\frac{k-2}{2}$. 
Thus, at most one of these trees is a path since otherwise by Claim~\ref{claim:Balanced} the desired lower bound follows. 
We assume also that  $\textsf{S}_{n,x}$ and $\textsf{P}_{n,x}$ contain $T_k$ as a subgraph for all $x$ with $\frac{k}{2}<x<k$ for otherwise the desired lower bound follows. 

\begin{claim}\label{claim:map to clique}
    Let $x$ be a number such that $a_x-x+2\geq \frac{k-c}{4}$, for some constant $c$. If there is an embedding of $T_k$ in $\textsf{S}_{n,x}$ or $\textsf{P}_{n,x}$, then $v$ must be mapped to a vertex of a clique in $\textsf{S}_{n,x}$ or $\textsf{P}_{n,x}$. 
\end{claim}
\begin{poc}
Since $d(v)=3$, we only need to prove that $v$ is  not  mapped to $w$ in $\textsf{S}_{n,x}$ or $v$ is  not  mapped to $w_i$ in $\textsf{P}_{n,x}$. 
 Note first that $v$ is  not  mapped to $w$ in $\textsf{S}_{n,x}$, otherwise, as at least two of the trees $T_1,T_2,T_3$ are not paths, implying that the images of two of them, say $T_1$ and $T_2$, must contain a vertex of some clique in $\textsf{S}_{n,x}$. Hence, $T_1$ and $T_2$ both contain a sub-path of length at least $\frac{k-c}{4}-2$ from the vertex $w$ to a vertex of a clique, 
and we are done by Claim~\ref{claim:Balanced}.
Similarly, for every embedding of $T_k$ in $\textsf{P}_{n,x}$ that maps $v$ to some $w_i$, at most one of the trees $T_1, T_2, T_3$ contains a vertex in the cliques, otherwise two of them contain a sub-path of length at least $\frac{k-c}{4}-2$ and we are done by Claim \ref{claim:Balanced}. Thus we may assume only $T_1$ contains the vertices in the cliques. Let $A$ be the set of vertices of $T_1$ embedded in the cliques. Thus $T_k-A$ is a sub-tree of $T_k$ of order at least $\frac{k-2}{2}+(a_x-x+1)\geq \frac{3k-c-8}{4}$ and maximum degree at most 3. By Claim \ref{claim:bounded degree} with $T_k-A$ playing the role of $S$, we have $\ex_c(n,T_k)\geq \left( \frac{1}{4}-o_k(1) \right)kn$. Hence, $v$ is not mapped to $w_i$ in $\textsf{P}_{n,x}$.
\end{poc}

\begin{figure}[ht]
\centering    
\begin{tikzpicture}[scale=0.9]
\fill (5.5,20.975) circle (0.07cm);
\draw [line width=1pt] (2,17) circle (1cm);
\draw [line width=1pt] (5,17) circle (1cm);
\draw [line width=1pt] (8.5,17) circle (1cm);
\draw[decorate, decoration=snake, segment length=6.5mm, line width=1pt] 
    (5.5,21)-- (0.5,19.5);
\draw [decorate, decoration=snake, segment length=6.5mm, line width=1pt, color=red]  (5.5,21)-- (2,17.8);
\draw [decorate, decoration=snake, segment length=6.5mm, line width=1pt, color=red]  (5.5,21) -- (5.2,19);
\draw [decorate, decoration=snake, segment length=6.5mm, line width=1pt, color=red, dashed]  (5.5,21)-- (7.6,19);
\draw [decorate, decoration=snake, segment length=6.5mm, line width=1pt]  (5.2,19) -- (5,18);
\draw [decorate, decoration=snake, segment length=6.5mm, line width=1pt]  (7.6,19)-- (8.5,18);
\draw (2,17.5) node[anchor=center] {{\color{blue}$v$}};
\draw (2,17) node[anchor=center] {$K_{x_1}$};
\draw (5,17) node[anchor=center] {$K_{x_1}$};
\draw (8.5,17) node[anchor=center] {$K_{x_1}$};
\draw (1,20) node[anchor=center ] {$P$};
\draw (1.7,19) node[anchor=center ] {$P_{a_{x_1}-x_1+2}$};
\draw (6.2,19) node[anchor=center ] {$P_{a_{x_1}-x_1+2}$};
\draw (8.7,19) node[anchor=center] {$P_{a_{x_1}-x_1+2}$};
\draw (5.8,21.2) node[anchor=center] {$w$ ({\color{red}$u$})};
\draw (6.7,17) node[anchor=center] {$\cdots$};

\draw [dotted][line width=1pt] (11.3,19) circle (1cm);
\fill (11.3,19) circle (0.07cm);
\draw (11.3,18.7) node[anchor=center] {$v$};
\fill (13,19) circle (0.07cm);
\fill (14,19) circle (0.07cm);
\draw (13,18.7) node[anchor=center] {$v_1$};
\draw (14,18.7) node[anchor=center] {$v_2$};
\draw (15.3,19.5) node[anchor=center] {$P_{\geq a_{x_1}-x_1+1}$};
\draw [line width=1pt] (13,19)-- (14,19);
\draw [line width=1pt] (11.9,19.5)-- (13,19);
\draw [line width=1pt] (11.9,18.5)-- (13,19);
\fill (16.5,19) circle (0.07cm);
\draw (16.5,18.7) node[anchor=center] {$u$};
\draw [decorate, decoration=snake, segment length=6.5mm, line width=1pt, color=red]  (14,19)-- (16.5,19);
\draw [dotted][decorate, decoration=snake, segment length=6.5mm, line width=1pt, color=red]  (16.5,19)-- (18,20);
\draw [decorate, decoration=snake, segment length=6.5mm, line width=1pt, color=red]  (16.5,19)-- (18,18);
\draw[decorate,decoration={brace,mirror,raise=1pt,amplitude=0.4cm}] (14,18) -- (18.1,18)node[black,midway,yshift=-0.7cm]{$S_y$};
\end{tikzpicture}
\caption{The graph $T_k$ in $\textsf{S}_{n,x_1}$ on the left and the graph $T_k$ on the right.}
\label{fig:ConstructionsII}
\end{figure}

Let $x_1=\floor{\frac{k}{\sqrt{2}}}$ then we have $a_{x_1}-x_1+2\geq k-x_1-4\geq \frac{k}{4}-4$.
By Claim \ref{claim:map to clique}, every embedding  of $T_k$ in $\textsf{S}_{n,x_1}$ maps  $v$  to a vertex of a clique.
Furthermore, since the components of $\textsf{S}_{n,x_1}-w$ are strictly smaller than $k$, there must be a vertex $u$ of $T_k$ distinct from $v$ mapped to $w$. 
Thus $T_k$ contains a spider with central vertex $u$ as a subgraph with a leg of length at least $a_{x_1}-x_1+1$, see Figure \ref{fig:ConstructionsII}. 
Indeed, $T_k$ does not contain a pair of vertices from different cliques in $\textsf{S}_{n,x_1}$, otherwise there are two vertex disjoint sub-paths of $T_k$ such that each of them contains $a_{x_1}-x_1\geq \frac{k}{4}-6$ vertices and we are done by Claim~\ref{claim:Balanced} as a path is a spider with the central vertex of degree two.
On the other hand, as $T_k$ is a subgraph of  $\textsf{P}_{n,x_1}$, with the same argument $T_k$ does not contain a pair of vertices from different cliques in $\textsf{P}_{n,x_1}$. 
Thus $T_k$ contains an edge, say $v_1v_2$, such that after removing it from $T_k$ we get a component where the degrees of the vertices are at most three and which contains a sub-path of length  $a_{x_1}-x_1-1$, each vertex of this path has degree at most two in~$T_k$. 
Hence, by the previous argument in this paragraph, $T_k-v_1v_2$ contains a component denoted by $S_y$ (containing the vertex $v_2$) isomorphic to a spider with $y$ vertices and  at most three legs, with one leg of length at least $a_{x_1}-x_1$, see Figure~\ref{fig:ConstructionsII}.  
 Without loss of generality, by choosing  $S_y$ such that $y$ is the maximum possible, we may assume that the degree of $v_1$ in $T_k$ is at least three.

Now, we consider $x_2=k-y-1>\frac{k}{2}$ and an embedding  of $T_k$ in $\textsf{S}_{n,x_2}$.
If $v$ is mapped to $w$, then there are two vertex disjoint paths of length $a_{x_2}-x_2$ from $v$ since at least two of the trees $T_1,T_2,T_3$ are not isomorphic to a path. 
Furthermore, one of them is vertex disjoint from $S_y$ and  we are done by Claim~\ref{claim:Balanced}, since $a_{x_2}-x_2+1+y\geq \frac{2(k-y-1-k/2)^2}{k}+\frac{k}{2}-3\geq\frac{k}{2}-3$ for $x_2=k-y-1$.
If $v$ is mapped to a vertex of a clique then either the entire $S_y$ including the vertex $v_1$ is mapped outside of the  clique containing the vertex $v$, or inside of the clique.  
Indeed, If  $v_1$ is mapped in the same clique as $v$ and part of $S_y$ is mapped out of that clique, then the number of vertices outside of the clique is at most $y$ while the size of the clique is $k-y-1$, thus $v(T_k)\leq k-1$, a contradiction.
If $S_y$ with the vertex $v_1$ is mapped entirely outside of the clique then the vertex $v_1$ is mapped to the vertex $w$ or to a vertex of a different clique thus there is a sub-path vertex disjoint from $S_y$, from the clique to $w$,  of length  $a_{x_2}-x_2+1$.  
If $S_y$ is mapped entirely inside of the clique then  $T_k$ contains a sub-path vertex disjoint from $S_y$ of length  $a_{x_2}-x_2+1$, since there is a vertex of $T_k$ mapped to the vertex  $w$. 
In both cases we are done by Claim~\ref{claim:Balanced}, since $a_{x_2}-x_2+2+y>\frac{k}{2}-c$.

\section{Proof of Theorem \ref{thm:brooms}}\label{sec:broom}
For the lower bound, let $n>k-2\geq d\geq 2$. First, we consider $\textsf{G}_{n,d,\floor{\frac{d-1}{2}}}$ and an almost $(k-d)$-regular graph $\textsf G^\prime$, which is $(k-d)$-regular graph if $(k-d)n$ is even, and if $(k-d)n$ is odd then every vertex of $\textsf G^\prime$ has degree $k-d$ except one vertex of degree $k-d-1$. Clearly, $\textsf{G}_{n,d,\floor{\frac{d-1}{2}}}$ is $P_{d+1}$-free, thus it does not contain $B(k,d)$ as a subgraph. The almost $(k-d)$-regular graph $\textsf G^\prime$ is $B(k,d)$-free as it does not contain a vertex of degree at least $k-d+1$ and for every $n>k-d$ the almost ($k-d$)-regular graph $\textsf{G}^\prime$ always exists. 
Indeed, by considering appropriate parameters for circulant graphs, we can construct regular graphs. For example, let $G^\prime$ be a graph with $V(G^\prime)=\{v_0,v_1,\ldots, v_{n-1}\}$ and $E(G^\prime)=\{v_iv_{i+j}: v_i\in V(G^\prime) \mbox{\ and\ }  1\leq j\leq \frac{k-d}{2}\}$ if $k-d$ is even and $E(G^\prime)=\{v_iv_{i+j}: v_i\in V(G^\prime) \mbox{\ and\ }  1\leq j\leq \frac{k-d+1}{2}\}\setminus \{v_iv_{i+1}: v_i\in V(G^\prime) \mbox{\ and\ } i \mbox{\ is even}\}$ if $k-d$ is odd, where $v_{i+j}=v_{i+j\mod n}$. Clearly, $G^\prime$ is an almost $(k-d)$-regular graph and $G^\prime$ is connected.
The number of edges in these graphs are $e(\textsf G^\prime)=\floor{\frac{(k-d)n}{2}}$ and 
\[
e(\textsf{G}_{n,d,\floor{\frac{d-1}{2}}})=\floor{\frac{d-1}{2}}\left(n-\ceil{\frac{d+1}{2}}\right)+\binom{\ceil{\frac{d+1}{2}}}{2}.
\]

Now we consider graphs $\textsf{F}_{n,\frac{k+2}{2},1}$ if $d=\frac{k+2}{2}$ or $\frac{k+4}{2}$, $\textsf{F}_{n,d,2}$ and $\textsf{F}_{n,d,3}$ if $d=\frac{k+3}{2}$. We claim that these graphs are $B(k,d)$-free. Indeed, if one of these graphs contains a copy of $B(k,d)$, say $B$, and $u$ is the vertex with degree $k-d+1$ in $B$, then $u$ can not be embedded at $v$ as the size of every maximal $2$-connected block is at most $d-1$, where $v$ is the common vertex of all maximal $2$-connected blocks. So $B$ is contained in two maximal $2$-connected blocks, which is impossible as the size of any two maximal $2$-connected blocks of $\textsf{F}_{n,\frac{k+2}{2},1}$ and $\textsf{F}_{n,\frac{k+3}{2},2}$ is at most $\max\{2(\frac{k+2}{2}-1)-1, (\frac{k+3}{2}-1)+(\frac{k+3}{2}-2)-1\}=k-1$ and every vertex of  $\textsf{F}_{n,\frac{k+3}{2},3}-v$ has degree at most $d-3=k-d$.

So for every $n>k-2\geq d\geq 2$, we get the lower bound of Theorem \ref{thm:brooms}.

For the matching upper bound, let us consider  a connected graph $G$ with $n$ vertices, such that $n\geq k^{dk}$,  not containing  $B(k,d)$ as a subgraph and let $v$ be a vertex of $G$ such that $d(v)=\Delta(G)$.
    
    If $\Delta(G)\leq k-d$ then we have $e(G)\leq \floor{\frac{(k-d)n}{2}}$.

    If $k-d+1 \leq \Delta(G)\leq k-2$, then we will find a copy of $B(k,d)$ in $G$ with a greedy argument for every $n\geq k^{dk}$ resulting in a contradiction. 
First, we show that there exists a path of at least $dk$ vertices starting at $v$. 
Since if one considers a breadth-first search tree from the vertex $v$, the size of $i$-th neighborhood contains at most $k^i$ vertices.
Thus by the connectivity of $G$, we have a path of $dk$ vertices starting at  the vertex $v$.
Let $P=vu_2u_3\cdots u_{dk}$ be a path of order $dk$ starting at $v$, a vertex of maximum degree. Note that, we have $u_2\in N(v)$. 
By pigeonhole principle and since  $d(v)\leq k-2$, there exists a sub-path $P^\prime=u_iu_{i+1}\cdots u_{i+d-2}$ of $P$ such that $V(P^\prime)\cap N(v)=\{u_i\}$. Then $G[V(P^\prime)\cup N(v)\cup \{v\}]$ contains a copy of $B(k,d)$ as $d(v)\geq k-d+1$, a contradiction.

    If $\Delta(G)\geq k-1$, then  $G$ is $\C_{\geq d}$-free since otherwise there is a shortest path $P$ from $v$ to the longest cycle $C$ of $G$, such that $\abs{V(P)\cap V(C)}=1$ and $G[V(C)\cup V(P)\cup N(v)]$ contains a copy of $B(k,d)$.
If $G$ is $2$-connected by Theorem~\ref{thm:2-connected_Kopylov} we have the number of edges in $G$ is at most
$e(\textsf{G}_{n,d,\floor{\frac{d-1}{2}}})$ since $n$ is large. From here we assume $G$ is not $2$-connected and its maximal $2$-connected blocks are $G_1,G_2,\dots,G_s$. 
Let $n_i$ be the number of vertices of $G_i$ for every $i \in [s]$ and we assume $n_1\geq n_2\geq n_3\cdots \geq n_s$. 
Note that we have $\sum_{i=1}^{s}n_i=n+(s-1)$.
  By Theorem~\ref{Thm:Woodall-cycle}  the circumference of $G$ is at least $d-2$ or we have the desired upper bound.
Thus the circumference of $G$ is either $d-2$ or $d-1$.

\smallskip

\noindent{\bf Case 1.} The circumference of $G$ is $d-2$.
By Theorem~\ref{thm:2-connected_Kopylov}, for each $i\in [s]$ and $n_i\geq d-1\geq 5$ we have $e(G_i)\leq \max\{e(\textsf{G}_{n_i,d-1,2}), e(\textsf{G}_{n_i,d-1,\floor{\frac{d-2}{2}}})\}$.  It is easy to see that there exists a constant $a<2d$ such that  $e(\textsf{G}_{n_i,d-1,2})\leq e(\textsf{G}_{n_i,d-1,\floor{\frac{d-2}{2}}})$ if and only if $n_i\geq a$. Here $a=\frac{5d-16}{4}$ if $d$ is even and $a=\frac{5d-13}{4}$ if $d$ is odd. Thus we have
\begin{equation}\label{eqcase1}
\begin{displaystyle}
e(G_i) \leq 
\left.
\begin{cases}
 \floor{\frac{d-2}{2}}\left(n_i-\ceil{\frac{d}{2}}\right)+\binom{\ceil{\frac{d}{2}}}{2} \:\:\:\,\quad\mbox{if  $n_i\geq \max\{a, d-1\}$,} \\
\binom{d-3}{2}+2(n_i-(d-3))\:\:\,\,\quad\quad\mbox{if $d-1 \leq n_i< a$,} \\
\binom{n_i}{2} \,\quad\quad\quad\quad\quad\quad\quad\quad\quad\quad\quad \mbox{if $n_i\leq  d-2$.}   
\end{cases}
 \right\}
 \leq \floor{\frac{d-1}{2}}(n_i-1), 
 \end{displaystyle}
\end{equation}
and the latter equality holds if and only if $n_i=d-2$ and $d$ is even.

Let $\ell=\floor{\frac{d-1}{2}}\ceil{\frac{d+1}{2}}-\binom{\ceil{\frac{d+1}{2}}}{2}$. Note that since $n$ is large, $\ell \leq s$ if $n_1<\max\{a, d-1\}$.   
We claim that if one of the following statements holds, then $e(G)\leq e(\textsf{G}_{n,d,\floor{\frac{d-1}{2}}})$.

(i) $n_1\geq \max\{a, d-1\}$,

(ii) $n_1<a$ and $n_{\ell}\geq d-1$,

(iii) the size of $I=\{i\in [s]: n_i\leq d-2 \mbox{ and } e(G_i)\leq \binom{d-2}{2}-1\}$ is at least $\ell$.

If (i) holds, then by (\ref{eqcase1}) we have $e(G)\leq \floor{\frac{d-2}{2}}\left(n_1-\ceil{\frac{d}{2}}\right)+\binom{\ceil{\frac{d}{2}}}{2}+\sum_{i=2}^s (\floor{\frac{d-1}{2}}(n_i-1))\leq e(\textsf{G}_{n,d,\floor{\frac{d-1}{2}}})$ as $\sum_{i=1}^s n_i=n+s-1$ and $\floor{\frac{d-2}{2}}\left(n_1-\ceil{\frac{d}{2}}\right)+\binom{\ceil{\frac{d}{2}}}{2}\leq \floor{\frac{d-1}{2}}(n_1-\ceil{\frac{d+1}{2}})+\binom{\ceil{\frac{d+1}{2}}}{2}$.

If (ii) holds, then by (\ref{eqcase1}) we have $e(G)\leq \sum_{i=1}^{\ell} (\floor{\frac{d-1}{2}}(n_i-1)-1)+\sum_{i=\ell+1}^s(\floor{\frac{d-1}{2}}(n_i-1))\leq e(\textsf{G}_{n,d,\floor{\frac{d-1}{2}}})$.

If (iii) holds, then by the same calculation as in (ii) we have $e(G)=\sum_{i\in I}e(G_i)+\sum_{j\in [s]\setminus I}e(G_j)\leq \sum_{i\in I} (\floor{\frac{d-1}{2}}(n_i-1)-1)+\sum_{i\in [s]\setminus I}(\floor{\frac{d-1}{2}}(n_i-1))\leq e(\textsf{G}_{n,d,\floor{\frac{d-1}{2}}})$.

If (i), (ii) and (iii) do not hold, then we have $n_1<a$ and $n_{\ell}\leq d-2$ and the number of $G_i$ with $\ell \leq i\leq s$ which are not isomorphic to $K_{d-2}$ is at most $\ell-1$, so there must exist two blocks $G_{j_1}$ and $G_{j_2}$ isomorphic to $K_{d-2}$, since $n\geq k^{dk}>(\ell-1)(a+(d-2))+2(d-2)$. 
Note that since $G$ is $B(k,d)$-free and $G$ is connected, we must have $2(d-2)-1\leq k-1$ if $k-d+1\leq d-3$, thus $d\leq \frac{k+4}{2}$. 
By Theorem~\ref{Thm:Woodall-cycle} and $e(\textsf{F}_{n,d-1,1})\leq \frac{(d-2)(n-1)}{2}$, we have 
\[
\begin{displaystyle}
e(G) \leq e(\textsf{F}_{n,d-1,1})\leq 
\begin{cases}
e(\textsf{F}_{n,\frac{k+2}{2},1})\:\quad\quad\mbox{if  $d=\frac{k+4}{2}$ or $\frac{k+2}{2}$ }, \\
e(\textsf{F}_{n,\frac{k+3}{2},3})\:\quad\quad\mbox{if $d=\frac{k+3}{2}$}, \\
\floor{\frac{(k-d)n}{2}} \;\;\;\:\quad\quad \mbox{if $d\leq \frac{k+1}{2}$}.   
\end{cases}
 \end{displaystyle}
 \]

\smallskip

\noindent{\bf{Case 2.}} The circumference of $G$ is $d-1$.
By Theorem~\ref{thm:2-connected_Kopylov}, for each $i\in [s]$ and $n_i\geq d$ we have $e(G_i)\leq \max\{e(\textsf{G}_{n_i,d,2}), e(\textsf{G}_{n_i,d,\floor{\frac{d-1}{2}}})\}$.
Similarly, as in the previous case, we define threshold $a$. Here $a=\frac{5d-8}{4}$ if $d$ is even and $a=\frac{5d-11}{4}$ if $d$ is odd. Thus we have 
\begin{equation}\label{eqcase2}
\begin{displaystyle}
e(G_i) \leq
\begin{cases}
\floor{\frac{d-1}{2}}\left(n_i-\ceil{\frac{d+1}{2}}\right)+\binom{\ceil{\frac{d+1}{2}}}{2} \,\;\;\:\:\:\quad\mbox{if  $n_i\geq \max\{a, d\}$}, \\
\binom{d-2}{2}+2(n_i-(d-2))\:\quad\quad\quad\quad\quad\mbox{if $d\leq n_i< a$}, \\
\binom{n_i}{2} \;\:\quad\quad\quad\quad\quad\quad\quad\quad\quad\quad\quad\quad\quad \mbox{if $n_i\leq  d-1$}.
 \end{cases}
 \end{displaystyle}
\end{equation}

Let $G_i$ be a block that contains a cycle of order $d-1$. Then for every vertex $u\in V(G) \setminus V(G_i)$ we have $d(u)\leq k-d$ as $G$ is $B(k,d)$-free. 
Since $G_i$ is a maximal $2$-connected block, every vertex outside of $G_i$ is adjacent to at most one vertex of $G_i$, thus the number of edges from $G_i$ to $G-G_i$ is bounded by  $n-n_i$.
Thus if $d\geq \frac{k+4}{2}$ then by (\ref{eqcase2}) we have $e(G)\leq  e(G_i)+\frac{(k-d+1)(n-n_i)}{2}
\leq   e(\textsf{G}_{n,d,\floor{\frac{d-1}{2}}})$,  as $n$ is large.

If $d=\frac{k+3}{2}$ and $n_i\neq d-1$, then by (\ref{eqcase2}), $e(G)\leq  e(G_i)+\frac{(k-d+1)(n-n_i)}{2}
\leq   \max\{e(\textsf{G}_{n,d,\floor{\frac{d-1}{2}}}), e(\textsf{F}_{n,d,3})\}$.

If $d= \frac{k+3}{2}$ and $n_i=d-1$, let $G^\prime$ be a graph obtain from $G$ by contracting $G_i$ to a vertex. 
If $G^\prime$ is $\C_{\geq d-1}$-free, then by Theorem~\ref{Thm:Woodall-cycle} and (\ref{eqcase2}) we have $e(G)= e(G_i)+e(G')\leq e(G_i)+e(\textsf{F}_{n-n_i+1,d-1,1})\leq e(\textsf{F}_{n,d,2})$. 
Otherwise, $G$ contains two blocks of circumference $d-1$, then every vertex except at most one has degree at most $k-d$ and we have $e(G)\leq \frac{(k-d+1)(n-1)}{2}= e(\textsf{F}_{n,d,3})$ as $n$ is large.

If $d\leq \frac{k+2}{2}$, then by Theorem \ref{Thm:Woodall-cycle} we have 
$e(G)\leq e(\textsf{F}_{n,\frac{k+2}{2},1})$ if $d= \frac{k+2}{2}$ and $e(G)\leq \frac{(d-1)(n-1)}{2}\leq \floor{\frac{(k-d)n}{2}}$ if $d\leq \frac{k+1}{2}$.

This concludes the proof of Theorem~\ref{thm:brooms}.

\section{Concluding remarks}\label{sec:concluding remarks}
In this paper, for the Tur\'an problem, we determine the maximum effect the additional connectivity condition could have over all trees. An interesting future direction of research would be to identify appropriate parameters (if they exist) of a tree that determine the asymptotic behavior of its connected extremal number. The constructions in this paper could be useful for this problem. 

We remark that two natural parameters are relevant for the connected extremal number of a tree~$T$: (i) the maximum degree, and (ii) the size of the smaller color class of the bipartition of $T$. But these two parameters alone do not suffice to determine $\ex_c(n,T)$. Indeed, consider the binary tree $T$ with  $2^{2r}-1$ vertices. Note that  $\Delta(T)=3$ and the sizes of the bipartition classes of $T$ are  $\frac{2^{2r}-1}{3}$ and $2\cdot \frac{2^{2r}-1}{3}$. Using (i), one would take a $2$-regular graph, which contains only $n$ edges, and if we take (ii) into account and use the graph $\textsf G_{n,2\cdot\frac{2^{2r}-1}{3}-1,\frac{2^{2r}-1}{3}-1}$, we get a lower bound of at least $\approx (\frac{{2^{2r}-1}}{3}-1)n$ edges. 
However, consider a graph $\textsf{S}$ consisting of  $\floor{\frac{n-1}{2^{2r}-5}}$ vertex disjoint $K_{2^{2r}-7}$ with pendant path of length two, a path of $n-1-(2^{2r}-5)\floor{\frac{n-1}{2^{2r}-5}}$ vertices and a vertex $w$ adjacent to a vertex with degree one of each of these components. Clearly, $\textsf{S}$ has about $\frac{n(2^{2r}-1)}{2}$ edges. Note that  $\textsf{S}$ is $T$-free since every possible mapping of $T$ in $\textsf{S}$ must contain the vertex $w$ and a vertex of a clique,
contradicting the observation that the maximum length of the bare path in $T$ is $2$. Thus, perhaps the size of the maximum bare path in a tree $T$ is also relevant.

We refer the interested reader to the following papers, for problems and extensions related to Erd\H{o}s-S\'{o}s conjecture see \cite{Besomi2021boundedegree,rozhon2019local,goerlich2016erdHos,lidicky2012turan,mclennan2005erdHos,sacle1997withoutC4,wozniak1996erdos,Brandt1996girth5}, for extensions of Berge hypergraphs see \cite{gerbner2020general,gyHori2019tur}, for extensions of colored graphs see \cite{salia2019erdHos}. 

Caro, Patkós, and Tuza~\cite{caro2022connected} asked whether the connected extremal number becomes monotone eventually. In particular, for every graph $F$, there exists a constant $N_{F}$, such that for every $n\geq N_{F}$, $\ex_{c}(n,F)\leq \ex_{c}(n+1,F)$. We observe that this is true when $F$ contains a cycle. 

\begin{proposition}\label{prop:monotone}
For every graph $F$ containing a cycle, there exists a constant $N_F$ such that for every $n>N_F$ we have $\ex_{c}(n,F)<\ex_{c}(n+1,F)$.
\end{proposition}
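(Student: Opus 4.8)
The plan is to exhibit, for all sufficiently large $n$, an $n$-vertex connected $F$-free graph $G$ such that adding one vertex allows us to strictly increase the edge count while remaining connected and $F$-free. Since $F$ contains a cycle, say of length $g$, the key structural fact I want to exploit is that a graph with maximum degree small enough and girth large enough — or more simply, a sparse enough graph on a part of the vertex set — cannot contain $F$, and that such graphs admit a lot of ``room'' to grow.

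The concrete approach I would take is the following. First I would fix an optimal connected $F$-free graph $H$ on $m$ vertices for some moderate $m$, i.e.\ $e(H) = \ex_c(m, F)$. Now take $H$ together with a long path (or a path attached to $H$ at a single vertex) to form an $n$-vertex connected graph $G_n$; the path portion contributes exactly one edge per added vertex, so $e(G_n) = \ex_c(m,F) + (n - m)$, and $G_n$ is $F$-free because $F$ has a cycle and hence cannot live in the pendant path, while the bounded portion containing $H$ has too few vertices to be enlarged. This already gives $\ex_c(n,F) \ge \ex_c(m,F) + (n-m)$, so $\ex_c(n,F)$ grows at least linearly; in particular $\ex_c(n+1,F) \ge \ex_c(n,F) + 1$ \emph{provided} the extremal graph on $n$ vertices itself has a pendant-path-like structure we can lengthen. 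To make this rigorous I would instead argue directly: take \emph{any} extremal connected $F$-free graph $G$ on $n$ vertices and show $\ex_c(n+1,F) \ge e(G)+1$ by a local modification — subdivide a carefully chosen edge of $G$ (which keeps it connected and, since $F$ contains a cycle and subdividing an edge does not create a shorter cycle or increase any degree, keeps it $F$-free) and then add one more edge. The cleanest version: pick an edge $uv$ of $G$, subdivide it with a new vertex $w$, and then add the edge $uv$ back; the result has $n+1$ vertices, $e(G)+1$ edges, is connected, and is $F$-free as long as this $3$-vertex gadget on $\{u,v,w\}$ cannot be completed to a copy of $F$ — which holds because any copy of $F$ using $w$ would have to use $w$ as a degree-$2$ vertex on a path, and replacing the path $u\,w\,v$ by the edge $uv$ would give a copy of $F$ (or a homomorphic image) in $G$, a contradiction, after checking $F$ has a vertex of degree $2$ lying on a cycle or handling the edge-case by choice of $uv$.

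The main obstacle I expect is the $F$-freeness of the enlarged graph: subdivision is safe for subgraph-containment only when we are careful, because $F$ itself might have degree-$2$ vertices, so a copy of $F$ in the new graph could route through the subdivision vertex $w$ and ``absorb'' it. The fix is that since $F$ is a fixed finite graph containing a cycle, it has bounded size, so I can choose $n > N_F$ large enough and choose the edge $uv$ to subdivide to lie in a part of $G$ that is already ``path-like'' (degree $\le 2$ at both endpoints along some direction) — such an edge exists because an extremal $G$ cannot be, say, $2$-connected everywhere once $n$ is huge relative to $|F|$, as that would force too many edges on too few vertices by the relevant Kopylov-type bound (Theorem~\ref{thm:2-connected_Kopylov} applied to $\C_{\ge g+1}$ where $g$ is the length of the longest cycle in $F$). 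With such an edge, any copy of $F$ in the subdivided-plus-one-edge graph that uses $w$ yields, by contracting $w$ back, a copy of $F$ in $G$, contradicting $F$-freeness of $G$; and any copy avoiding $w$ lives in $G$ itself. Hence $\ex_c(n+1,F) \ge e(G) + 1 = \ex_c(n,F)+1 > \ex_c(n,F)$, completing the proof.
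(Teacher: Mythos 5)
Your approach is genuinely different from the paper's and contains two gaps that prevent it from working as stated.

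First, the claim that for large $n$ an extremal connected $F$-free graph must contain a ``path-like'' edge (both endpoints of degree at most $2$) is not correct, and the appeal to Theorem~\ref{thm:2-connected_Kopylov} does not support it. That theorem bounds the edges of graphs that are $\C_{\geq k}$-free; an $F$-free graph need not avoid long cycles at all. The counterexample is already $F=K_3$: the unique extremal connected $K_3$-free graph is $K_{\lfloor n/2\rfloor,\lceil n/2\rceil}$, which is $2$-connected with minimum degree about $n/2$, so it has no bridges and no vertices of degree $2$ once $n\geq 6$. Thus the structural input you need — a low-degree edge to subdivide — simply does not exist for extremal graphs in general, and the dichotomy ``extremal $\Rightarrow$ not everywhere $2$-connected for large $n$'' is false.

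Second, even where such an edge $uv$ exists, the gadget (subdivide $uv$ with $w$ and re-add $uv$) is not $F$-safe. It creates a triangle on $\{u,v,w\}$, and ``contracting $w$ back'' does not produce a subgraph isomorphic to $F$ in $G$; as you note yourself, it only produces a homomorphic image, and $F$-freeness is a subgraph condition, not a homomorphism condition. Your case analysis also omits the possibility that $w$ is used as a leaf of $F$ rather than as a degree-$2$ vertex. For a concrete failure: take $F=K_4^-$ and let $uv$ lie in a triangle $uvu'$ of $G$ with $\deg(u)=\deg(v)=2$; then $\{u,v,w,u'\}$ spans $K_4^-$ in the modified graph even though $G$ was $F$-free.

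The paper avoids all structural assumptions about the extremal graph. It observes that since $F$ contains a cycle $C_\ell$, one has $\ex_c(n,F)\geq \ex(n,C_\ell)$ (the Turán graph for a single cycle is connected because $C_\ell$ is $2$-connected), and $\ex(n,C_\ell)$ grows superlinearly. Hence every extremal connected $F$-free graph $G$ has average, and therefore maximum, degree larger than $k=|V(F)|$. Attaching a pendant vertex $w$ to a vertex $v$ of degree at least $k$ keeps the graph connected, adds one edge, and stays $F$-free: any copy of $F$ in $G+w$ must use $w$ as a leaf attached to $v$, but since $\deg_G(v)\geq k$ and the copy occupies at most $k-1$ vertices of $G$, some neighbor of $v$ in $G$ is unused and can replace $w$, giving a copy of $F$ in $G$, contradiction. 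This yields $\ex_c(n+1,F)\geq \ex_c(n,F)+1$ directly, with no structural claim about $G$. You should replace the subdivision idea with this pendant-vertex argument (or prove the required structural fact, which, as the $K_3$ example shows, cannot be done).
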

\begin{proof}
    Let $n$ be sufficiently large and let $C_{\ell}$ be a cycle of $F$. Note that $\ex(n, C_{\ell})=\floor{\frac{n^2}{4}}$ if $\ell$ is odd and $\ex(n, C_{\ell})\geq cn^{1+4/(3\ell-4)}$ for some positive constant $c$ if $\ell$ is even and $n$ large enough (see \cite{Lazebnik1995new}).
    Thus, there exists $N_F$, such that if $n>N_F$ then $\ex(n,C_{\ell})>\frac{kn}{2}$, where $k$ is the number of vertices in $F$. 
    Note that the extremal graph for $C_\ell$ is connected, since $C_{\ell}$ is a $2$-connected graph.
    Thus we have $\ex_c(n,F) \geq \ex(n,C_{\ell})>\frac{kn}{2}$. 
    Therefore, by averaging, every connected extremal $n$-vertex $F$-free graph $G$ contains a vertex of degree at least $k$, for sufficiently large $n$. Let $G'$ be an $(n+1)$-vertex graph obtained from $G$, an $n$-vertex connected extremal graph of $F$, by adding a pendant vertex adjacent to a vertex of degree at least $k$, such vertex exists for every $n> N_F$. 
    It is easy to observe that the $(n+1)$-vertex connected graph  $G'$ is $F$-free and so $\ex_{c}(n,F)<\ex_{c}(n+1,F)$ as desired.
  \end{proof}

\textbf{Acknowledgement.} 
We would like to express our sincere appreciation to the anonymous referees for their valuable comments.

\bibliography{Proposal.bib}

\end{document}